\numberwithin{equation}{section}
\newtheorem{proposition}{Proposition}[section]
\newtheorem{lemma}[proposition]{Lemma}
\newtheorem{theorem}[proposition]{Theorem}
\theoremstyle{definition}
\newtheorem{remark}[proposition]{Remark}
\newtheorem{definition}[proposition]{Definition}
\DeclareMathOperator{\tr}{tr}
\DeclareMathOperator{\gr}{gr}
\DeclareMathOperator{\Aut}{Aut}
\DeclareMathOperator{\Proj}{Proj}
\DeclareMathOperator{\DF}{DF}
\DeclareMathOperator{\Lie}{Lie}
\DeclareMathOperator{\opF}{F}
\newcommand{\C}{\mathbb{C}}
\newcommand{\Q}{\mathbb{Q}}
\renewcommand{\epsilon}{\varepsilon}
\renewcommand{\L}{\mathcal{L}}
\newcommand{\X}{\mathcal{X}}
\newcommand{\Y}{\mathcal{Y}}
\renewcommand{\H}{\mathcal{H}}
\newcommand{\scZ}{\mathcal{Z}}
\title[K-semistability of optimal degenerations]{K-semistability of optimal degenerations}
\author[Ruadha\'i Dervan]{Ruadha\'i Dervan}
\address{Ruadha\'i Dervan, DPMMS, Centre for Mathematical Sciences, Wilberforce Road, Cambridge CB3 0WB, United Kingdom}\email{R.Dervan@dpmms.cam.ac.uk}
\begin{document}

\begin{abstract} K-polystability of a polarised variety is an algebro-geometric notion conjecturally equivalent to the existence of a constant scalar curvature K\"ahler metric. When a variety is K-unstable, it is expected to admit a ``most destabilising'' degeneration. In this note we show that if such a degeneration exists, then the limiting scheme is itself relatively K-semistable.
\end{abstract}

\maketitle

\section{Introduction}

Given a smooth projective variety $X$ endowed with an ample line bundle $L$, one of the central questions in K\"ahler geometry is to understand whether or not the K\"ahler class $c_1(L)$ admits a K\"ahler metric of constant scalar curvature. The Yau-Tian-Donaldson conjecture states that the existence of such a metric should be equivalent to the purely algebro-geometric notion of K-polystability \cite{STY,GT,SD2}. This conjecture has been completely solved when $L=-K_X$ is the anticanonical class, so that $X$ is Fano and the metric is necessarily K\"ahler-Einstein \cite{GT,RB,CDS}, but is open in general. When $(X,L)$ is given the extra structure of a rational vector field $v$, one can also ask for $(X,L)$ to be K-polystable relative to $v$ \cite{gabor-blms}, which is then conjecturally equivalent to the existence of an extremal metric in $c_1(L)$.

By definition, K-polystability means that for all $\C^*$-degenerations of $(X,L)$ a certain weight called the Donaldson-Futaki invariant is non-negative, and equal to zero if and only if the $\C^*$-action fixes $(X,L)$. These $\C^*$-degenerations are called test configurations. Thus if $(X,L)$ is not K-polystable, then it admits a degeneration to another scheme $(Y,H)$ with non-positive Donaldson-Futaki invariant. Such a test configuration is far from unique when it exists, and so it is natural to ask if there is a test configuration which is most responsible for the instability of $(X,L)$. 

A concrete way of interpreting this question is to ask for a test configuration for which the \emph{normalised} Donaldson-Futaki invariant is minimised, namely for which the Donaldson-Futaki invariant divided by the $L^2$-norm is minimised \cite{SD2}. We call such a test configuration an optimal degeneration when it exists. Existence has been proven by Chen-Sun-Wang on polarised manifolds provided the Calabi flow satisfies a strong curvature boundedness assumption \cite{CSW}, and a weak version of existence has been proven in the toric case by Sz\'ekelyhidi again under assumptions on the Calabi flow \cite{GS-toric}. 

Note that if $(Y,H)$ is the central fibre of an optimal degeneration, then it naturally admits a $\C^*$-action induced from the structure of the test configuration, with corresponding vector field which we denote $v$. The purpose of this short note is to prove the following:

\begin{theorem}\label{intromainthm} Suppose $(Y,H)$ is the central fibre of an optimal degeneration for $(X,L)$, with $\C^*$-action induced by a vector field $v$. Then $(Y,H)$ is K-semistable relative to $v$. \end{theorem}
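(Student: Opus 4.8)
The plan is to argue variationally. By definition the optimal degeneration minimises the normalised Donaldson-Futaki invariant $\xi \mapsto \DF(\xi)/\|\xi\|$ over all test configurations of $(X,L)$, and it is attained at the generator $v$ of the central fibre $(Y,H)$. I would show that the directional derivative of this functional at $v$, taken in the direction of an arbitrary test configuration of $Y$ compatible with $v$, is exactly a positive multiple of the \emph{relative} Donaldson-Futaki invariant of that test configuration. Minimality then forces this derivative to be non-negative, which is precisely the assertion that $(Y,H)$ is K-semistable relative to $v$.

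To set this up, I would first recall that once the central fibre is fixed together with its torus action, the Donaldson-Futaki invariant is a linear functional, and the $L^2$-norm the square root of a positive-definite quadratic form, on the Lie algebra $\mft$ of the torus; both are read off from the Hilbert and weight polynomials, whose subleading coefficients depend linearly on the generator while the leading (Hilbert) coefficients are degeneration invariants and so are unchanged. Writing $\langle\cdot,\cdot\rangle$ for the associated inner product, the relative invariant with respect to $v$ is
\[
\DF_v(w) \;=\; \DF(w) \;-\; \frac{\langle v,w\rangle}{\|v\|^{2}}\,\DF(v),
\]
a formula invariant under rescaling $v$ and satisfying $\DF_v(v)=0$; relative K-semistability of $(Y,H)$ is the statement that $\DF_v(w)\ge 0$ for every test configuration of $Y$ compatible with $v$.

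The geometric content is a composition construction. Given the optimal degeneration, with central fibre $(Y,H)$ and generator $v$, and given any test configuration $\mathcal{W}$ of $(Y,H)$ with generator $w$ commuting with $v$, I would build a one-parameter family $\mathcal{X}_\lambda$ of test configurations of $(X,L)$, for small rational $\lambda>0$, degenerating $X$ first to $Y$ and then $Y$ to the central fibre of $\mathcal{W}$. Concretely this combines the filtration of $R=\bigoplus_k H^0(X,L^k)$ defining the optimal degeneration with the induced filtration of $\gr R$ defining $\mathcal{W}$, the parameter $\lambda$ recording the relative weight of the two gradings, with $\mathcal{X}_0$ the optimal degeneration. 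On the rank-two torus generated by $v$ and $w$ acting on the common central fibre, linearity of $\DF$ and the quadratic dependence of the norm give
\[
f(\lambda) \;:=\; \frac{\DF(v+\lambda w)}{\|v+\lambda w\|}
\;=\; \frac{\DF(v)+\lambda\,\DF(w)}{\sqrt{\|v\|^{2}+2\lambda\langle v,w\rangle+\lambda^{2}\|w\|^{2}}},
\]
a smooth function near $\lambda=0$, and a direct computation yields
\[
f'(0) \;=\; \frac{1}{\|v\|}\left(\DF(w)-\frac{\DF(v)}{\|v\|^{2}}\langle v,w\rangle\right) \;=\; \frac{1}{\|v\|}\,\DF_v(w).
\]
Since $v$ minimises the normalised invariant and each $\mathcal{X}_\lambda$ competes in that minimisation, $f(\lambda)\ge f(0)$ for $\lambda\ge 0$, hence $f'(0)\ge 0$, so $\DF_v(w)\ge 0$; ranging over all compatible $\mathcal{W}$ gives the theorem. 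When $w\in\mft$, so that $\mathcal{W}$ is an automorphism rather than a genuine degeneration, both $\pm w$ are admissible and one gets the two-sided conclusion $\DF_v(w)=0$, confirming that $v$ is the correct reference vector field.

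I expect the main obstacle to be the composition construction together with the differentiability it is meant to furnish. One must check that the combined filtration genuinely defines a test configuration of $(X,L)$ for small $\lambda$, that $\mathcal{X}_0$ recovers the optimal degeneration, and above all that $\DF$ and the $L^2$-norm vary along the family exactly as the displayed formula predicts even though the central fibre jumps at $\lambda=0$. This last point is where the real work lies: it requires controlling the Duistermaat-Heckman measure of the composed filtration and verifying that the weight data of $v$ are preserved under the further degeneration induced by $w$. Granting this, the variational step is entirely formal.
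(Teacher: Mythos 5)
Your proposal is correct and is essentially the paper's own argument: the composed family $\X_\lambda$ you describe (with $\lambda = 1/m$) is precisely the content of Lemma~\ref{lx} (Li--Wang--Xu, Li--Xu), and your one-sided derivative inequality $f'(0)=\DF_v(w)/\|v\|_2\geq 0$ is the paper's contradiction argument as $m\to\infty$, recast in variational form. The two points you flag as the real work --- that the composed filtration gives a genuine test configuration of $(X,L)$ for small $\lambda$, and that the weight data of $v$ are preserved under the further degeneration --- are exactly what the paper settles by citing Lemma~\ref{lx} and by Remark~\ref{wang}, respectively.
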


Our proof is completely algebro-geometric, and thus holds for arbitrary varieties or schemes $(Y,H)$. In the K-semistable (rather than relatively K-semistable) case, the proof simplifies and is due to Li-Wang-Xu in the Fano setting \cite{lwx2}. In this case, the Theorem states that if $(X,L)$ degenerates to $(Y,H)$ with vanishing Donaldson-Futaki invariant, then $(Y,H)$ must be K-semistable. Our main technical tool is a result of Li-Xu and Li-Wang-Xu relating degenerations of $(X,L)$ to those of $(Y,H)$, which uses ideas from the theory of toric degenerations \cite{LX,lwx2}.

An analytic motivation to study optimal degenerations is the following inequality due to Donaldson.

\begin{theorem}\cite{SD} We have $$\inf_{\omega \in c_1(L)} \|S(\omega) - \hat S\|_2 \geq - \inf_{(\X,\L)} \frac{\DF(\X,\L)}{\|(\X,\L)\|_2},$$ where the infimum on the right hand side is taken over all test configurations for $(X,L)$. Here $S(\omega)$ denotes the scalar curvature, $\hat S$ denotes the average scalar curvature, $\DF(\X,\L)$ denotes the Donaldson-Futaki invariant, $\|(\X,\L)\|_2$ denotes the $L^2$-norm of the test configuration $(\X,\L)$ and $\|S(\omega) - \hat S\|_2^2 = \int_X (S(\omega) - \hat S)^2\omega^n$ is the $L^2$-norm of the average scalar curvature.  \end{theorem}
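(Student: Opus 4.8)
The plan is to deduce this purely analytic inequality from the behaviour of the Mabuchi K-energy $\M$ along geodesic rays in the space of K\"ahler potentials, following the circle of ideas of Phong--Ross--Sturm and the moment-map heuristic underlying Donaldson's original argument. The entire estimate rests on a single pairing of Cauchy--Schwarz with a convexity (monotonicity) statement, so I would first isolate that core and only afterwards supply the harder analytic inputs.

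Fix an arbitrary $\omega \in c_1(L)$ and an arbitrary test configuration $(\X,\L)$ with $\|(\X,\L)\|_2 > 0$. First I would attach to this data a weak geodesic ray $(\omega_t)_{t\ge 0}$, with $\omega_t = \omega + \ddb \varphi_t$, emanating from $\omega_0 = \omega$ and modelled on the $\C^*$-action of the test configuration; concretely one pulls back a fixed metric under the $\C^*$-flow and solves the homogeneous complex Monge--Amp\`ere equation defining the geodesic. The two inputs I need about this ray are: (i) the \emph{slope formula}, that the asymptotic slope of the K-energy recovers the Donaldson--Futaki invariant,
\[
\lim_{t\to\infty}\frac{\M(\omega_t)}{t} = \DF(\X,\L);
\]
and (ii) that the ray has constant speed equal to the norm of the test configuration, so that $\|\dot\varphi_t\|_{L^2(\omega_t)} = \|(\X,\L)\|_2$ for all $t$.

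Granting these, the argument is short. Convexity of $\M$ along the geodesic makes $t \mapsto \frac{d}{dt}\M(\omega_t)$ non-decreasing, so its limit at infinity dominates its value at $t=0$; combined with the slope formula this gives
\[
\DF(\X,\L) \;\ge\; \frac{d}{dt}\Big|_{t=0}\M(\omega_t) \;=\; -\int_X \dot\varphi_0\,(S(\omega)-\hat S)\,\omega^n.
\]
Applying Cauchy--Schwarz to the final integral and using (ii) to identify $\|\dot\varphi_0\|_{L^2(\omega)}$ with $\|(\X,\L)\|_2$ yields
\[
\DF(\X,\L) \;\ge\; -\,\|(\X,\L)\|_2\,\|S(\omega)-\hat S\|_2.
\]
Dividing by $\|(\X,\L)\|_2$, then taking the supremum over all test configurations and the infimum over all $\omega \in c_1(L)$, rearranges this into precisely the stated bound.

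The main obstacle is entirely in inputs (i) and (ii): geodesic rays attached to test configurations are only of bounded-Laplacian regularity (Chen, Phong--Sturm), so the derivative formula for $\M$, its convexity, and the slope computation must all be justified in a weak sense, and the singularities of the central fibre handled by an approximation or resolution argument. Identifying the geometric speed of the geodesic with the algebraically defined norm $\|(\X,\L)\|_2$ is a further nontrivial point. An alternative that avoids infinite-dimensional analysis is Donaldson's original route: embed $X$ by $|L^k|$, realise the test configuration as a one-parameter subgroup of $\mathrm{U}(N_k+1)$, interpret $\DF(\X,\L)$ as a moment-map weight, and run the identical Cauchy--Schwarz-plus-monotonicity estimate in finite dimensions, with the Bergman kernel expansion recovering $\|S(\omega)-\hat S\|_2$ as the limiting norm of the moment map as $k \to \infty$.
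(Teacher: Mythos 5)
Your proposal is correct in outline, but note first that the paper itself contains no proof of this statement: it is quoted from Donaldson's \emph{Lower bounds on the Calabi functional} \cite{SD}, and Donaldson's actual argument is precisely the finite-dimensional route you relegate to your final paragraph — embed $X$ by $|L^k|$, view the Fubini--Study data as a moment map for $\Uni(N_k+1)$, prove a linear-algebra lemma (monotonicity of the pairing of the moment map with the generator $A_k$ along the one-parameter orbit, plus Cauchy--Schwarz) giving $w_k \geq -\|A_k\|\cdot\|\mu\|$ at each finite level, and then let $k \to \infty$ using the Bergman kernel expansion to recover $\|S(\omega)-\hat S\|_2$ and the asymptotics of $\tr(A_k)$, $\tr(A_k^2)$ to recover $\DF$ and the $L^2$-norm. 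Your primary route through geodesic rays is a genuinely different proof that does work, but only with machinery postdating Donaldson: convexity of $\M$ along weak $C^{1,1}$ geodesics is Berman--Berndtsson, the constancy in $t$ of the pushforward $(\dot\varphi_t)_*\mathrm{MA}(\varphi_t)$ and its identification with the Duistermaat--Heckman measure (hence of the \emph{centred} speed with $\|(\X,\L)\|_2$, which is a variance --- you should centre $\dot\varphi_0$, harmless since $S(\omega)-\hat S$ has mean zero) is due to Hisamoto, and the slope formula is due to Sj\"ostr\"om Dyrefelt and Boucksom--Hisamoto--Jonsson. On that last point your input (i) is stated incorrectly: the asymptotic slope of $\M$ along the ray is the non-Archimedean Mabuchi functional, which satisfies $\lim_{t\to\infty}\M(\omega_t)/t \leq \DF(\X,\L)$ with equality only for normal test configurations with reduced central fibre; fortunately the inequality goes the right way, since your chain only needs $\DF(\X,\L) \geq \lim_t \M(\omega_t)/t \geq \tfrac{d}{dt}\big|_{t=0^+}\M(\omega_t) \geq -\|(\X,\L)\|_2\,\|S(\omega)-\hat S\|_2$, so the argument survives once you replace the claimed equality by this one-sided bound (and justify the derivative formula at $t=0^+$ in the weak-regularity setting, as you anticipate). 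In short: what your route buys is conceptual transparency and compatibility with the variational framework; what Donaldson's route buys is self-containedness at the time of writing and no need for weak geodesic regularity theory.
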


Donaldson conjectures that the two infima actually coincide. Thus conjecturally an optimal degeneration is precisely a test configuration which realises the infimum of the norm squared of the scalar curvature, i.e. the Calabi functional. Our main result then says that a test configuration realising the infimum of the Calabi functional must be a degeneration to a relatively K-semistable scheme. 

K-stability is motivated by, and is closely related to, Geometric Invariant Theory. From this point of view, an optimal degeneration is the direct analogue of Kempf's most destabilising one-parameter subgroups \cite{GK}. This perspective also leads one to expect that Theorem \ref{intromainthm} is essentially sharp, i.e. $(Y,H)$ should not be K-polystable relative to $v$ in general, but instead $(Y,H)$ should be a deformation of a K-polystable scheme which, if it were a smooth variety, should admit an extremal metric. In the situation of Chen-Sun-Wang, it is shown that $(Y,H)$ is a small deformation of another polarised manifold which admits an extremal metric \cite{CSW}. Even then it seems to be new that $(Y,H)$ is relatively K-semistable, though it might be possible to obtain this from work of Clarke-Tipler \cite{CT}.

A conjecture of Donaldson states that an optimal degeneration should, in some sense, split up a variety into relatively log K-polystable pieces  \cite{GS-toric}, where by log K-polystability we include the data of a divisor. One concrete way of interpreting this is to consider this process in three steps. Firstly, there should be an optimal degeneration to a relatively K-semistable demi-normal variety $(Y,H)$ (see \cite[Definition 5.1]{kollar-singularities} for the definition of demi-normality), with relative K-semistability following from Theorem \ref{intromainthm}. In general, $Y$ will not be irreducible, but one should as the next step consider the normalisation $(Y^{\nu}, H^{\nu})$ together with the conductor divisor $D$ on $Y^{\nu}$ induced from the normalisation process. Then each test configuration for $(Y,H)$ induces one for $(Y^{\nu}, H^{\nu})$ and it is not hard to see from the intersection-theoretic definition that the Donaldson-Futaki invariant for $(Y,H)$ equals the log Donaldson-Futaki invariant for $(Y^{\nu}, H^{\nu})$ with respect to $D$ \cite{OS}. One can further show that the $L^2$-norm is unchanged by the normalisation process (see for example \cite[Theorem 3.14]{BHJ}, or one could use the behaviour of Euler characteristics under normalisation following \cite[p470]{SD}), and hence the inner product of two vector fields is also unchanged by normalisation by an identical argument to \cite[p80]{gabor-blms}, namely writing the inner product as a difference of two norms. In summary, $(Y^{\nu}, H^{\nu})$ together with $D$ is relatively log K-semistable with respect to those test configurations induced from $(Y,H)$. Unfortunately, this is not enough to show relative log K-semistability in general as there are more test configurations to consider. However, this does seem to strongly suggest that this is the geometric manner in which an arbitrary variety breaks up into relatively log K-semistable pieces. The final step arises from the expectation that each of these pieces should admit a relatively log K-polystable degeneration.

Lastly, we remark that it may be slightly too optimistic to expect that an optimal degeneration exists in general. Instead what seems most likely in complete generality is that $(X,L)$ admits a most destabilising filtration  in the sense of Witt Nystr\"om \cite[Section 1.3]{DWN} and Sz\'ekelyhidi \cite[Section 3]{filt}. If $$\C = F_0 \subset F_1 R \subset F_2R \subset \hdots \subset R$$ is a filtration of the coordinate ring $R = \oplus_{k \geq 0} H^0(X,L^k)$ of $(X,L)$, then one has an associated graded algebra $$\gr(F) = \oplus_{j \geq 0} (F_jR)/(F_{j-1}R).$$ When this graded algebra is finitely generated, Sz\'ekelyhidi shows that $\Proj_{\C} \gr(F)$ corresponds to the central fibre of a test configuration for $(X,L)$ \cite[Section 3.1]{filt}. Even when $\gr(F)$ is not finitely generated, the homogeneous spectrum $\Proj_{\C} \gr(F)$ \cite[Definition 00JN]{stacks-project} still produces a scheme \cite[Lemma 01MB]{stacks-project}, which will no longer be Noetherian. It is interesting to ask what the geometry of the limiting object $\Proj_{\C} \gr(F)$ is in this case, and in particular whether or not there is some sense in which it is relatively K-semistable. Such non-Noetherian schemes are speculated to be relevant for the compactification of moduli spaces of K-polystable varieties \cite{DN}. Lastly, we remark that in the Fano case, optimal degenerations \emph{are} known to exist if one replaces the normalised Donaldson-Futaki invariant with a slightly different numerical invariant \cite{CSW,DS}.

\vspace{4mm} \noindent {\bf Funding:} This work was supported by the \emph{Agence nationale de la recherche} grant \emph{GRACK}.

\vspace{4mm} \noindent {\bf Acknowledgements:} I thank Chi Li, Lars Sektnan and G\'abor Sz\'ekelyhidi for useful discussions. I also thank the referee for their helpful comments.

\section{K-stability} We work throughout over the complex numbers. Fix a polarised scheme $(X,L)$, so that $X$ is projective and $L$ is ample. 

\begin{definition}\cite{SD2} A \emph{test configuration} for $(X,L)$ is a scheme $\X$ with
\begin{enumerate}[(i)]
\item a surjective flat morphism $\pi: \X\to\C$,
\item  a relatively ample line bundle $\L$, 
\item a $\C^*$-action $\alpha$ commuting with $\pi$ and lifting to $\L$,
\end{enumerate}
such that $\pi^{-1}(t) \cong (X,L^{\otimes r})$ for all $t\neq 0$. We call $r$ the \emph{exponent} of $(\X,\L)$. We say that $(\X,\L)$ is a \emph{product} if $\X_0\cong X$.

\end{definition}

Each test configuration admits several important numerical invariants, defined using the induced $\C^*$-action on the central fibre $(\X_0,\L_0)$. Denote for $k \gg 0$ the Hilbert polynomial of $(\X_0,\L_0)$ by $$h(k) = \dim H^0(\X_0,\L_0^k) = a_0k^{n} + a_1k^{n-1} + O(k^{n-2}),$$ where $n=\dim X$. If the $\C^*$-action $\alpha$ on $H^0(\X_0,\L_0^k)$ has infinitesimal generator $A_k$, we define the \emph{total weight} of the $\C^*$-action for $k \gg 0$ to be $$w(k) =\tr (A_k)= b_0k^{n+1} + b_1 k^n + O(k^{n-1}).$$ This is simply the weight of the $\C^*$-action on the determinant of $H^0(\X_0,\L_0^k)$.

Suppose that $(\X,\L)$ admits an additional $\C^*$-action $\beta$ which fixes each fibre of $\pi$ and commutes with $\alpha$, hence induces a $\C^*$-action on $H^0(\X_0,\L_0^k)$ with infinitesimal generator $B_k$. We then say that $(\X,\L)$ is $\beta$\emph{-equivariant} and  denote $$d(k) =  \tr(A_k B_k)= d_0k^{n+2} +O(k^{n+1}).$$ It follows from their construction that the $a_i,b_i$ and $d_i$ are rational numbers \cite{SD}.

\begin{definition}\cite{SD,gabor-blms} We define the
\begin{enumerate}[(i)]
\item  \emph{Donaldson-Futaki invariant} of $(\X,\L)$ to be $$\DF(\X,\L) =  \frac{b_0a_1 - b_1a_0}{a_0},$$ 
\item  \emph{inner product} of $\alpha$ and $\beta$ to be $$\langle \alpha,\beta \rangle = \frac{d_0 - b_0^2}{a_0},$$
\item $L^2$\emph{-norm} to be $\|(\X,\L)\|_2 = (\langle\alpha,\alpha\rangle)^{1/2},$
\item \emph{relative Donaldson-Futaki invariant} to be $$\DF_{\beta}(\X,\L) = \DF(\X,\L) - \frac{\langle \alpha,\beta \rangle}{r\langle\alpha,\alpha\rangle}\opF(\beta).$$
\end{enumerate} Here we use the notation $\opF(\beta)$ for the Donaldson-Futaki invariant of the product test configuration for $(X,L)$ induced by $\beta$. 
\end{definition}

We shall always assume our test configurations are non-degenerate, in the sense that $\|(\X,\L)\|_2>0$, following \cite{filt}. When $(X,L)$ is a normal variety, this  is equivalent to restricting to test configurations with normal total space \cite{RD-imrn,BHJ}.

\begin{remark}\label{wang}The value $F(\beta)$ equals ($r^{-1}$ times) the corresponding value for the product test configuration induced by $\beta$ for $(\X_0,\L_0)$. This was perhaps first observed in this context by Wang \cite[Theorem 26]{wang1}. This can also be obtained by analytic means in the smooth situation \cite[Proposition 4.3]{RD-kahler}, or by describing the weight as an Euler characteristic \cite{SD2,wang2,YO}, and using constancy of Euler characteristics in flat families. The latter argument extends to imply the norm of $\beta$ can also be calculated either on $X$ or $\X_0$. \end{remark}

\begin{definition}\cite{SD2} We say that $(X,L)$ is \emph{K-semistable} if $\DF(\X,\L)\geq 0$ for all test configurations.\end{definition}

The definition of K-polystability is slightly more delicate, as one should consider test configurations which are isomorphic to products away from codimension two subschemes \cite{stoppa-note}. Supposing now that $(X,L)$ admits an additional $\C^*$-action $\beta$, the definition of relative K-semistability is similar. 
\sloppy

\begin{definition}\cite{gabor-blms} We say that $(X,L)$ is \emph{K-semistable relative to} $\beta$ if $ \DF_{\beta}(\X,\L) \geq 0$ for all $\beta$-equivariant test configurations.\end{definition}

Relative K-polystability can be defined analogously, though we will not need this notion. We now pass from $\C^*$-actions $\alpha,\beta$ to rational elements of the Lie algebra $v_{\alpha},v_{\beta} \in \Lie(\Aut(\X_0,\L_0))$ for notational convenience. The definitions of the quantities involved immediately imply the following.

\begin{lemma}\label{linearity}
The Donaldson-Futaki invariant computed on $(\X_0,\L_0)$ is an additive function on the rational points of the Lie algebra. 
\end{lemma}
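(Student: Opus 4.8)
The plan is to read off the additivity directly from the defining formula, by showing that each ingredient of the Donaldson-Futaki invariant depends linearly on the vector field. Fix a rational $v \in \Lie(\Aut(\X_0,\L_0))$ and, for each $k \gg 0$, let $A_k(v) \in \End(H^0(\X_0,\L_0^k))$ denote the associated infinitesimal generator, so that the total weight of the product test configuration induced by $v$ is $w_v(k) = \tr A_k(v) = b_0(v) k^{n+1} + b_1(v) k^n + O(k^{n-1})$. The assignment $v \mapsto A_k(v)$ is nothing but the differential at the identity of the representation $\Aut(\X_0,\L_0) \to \GL(H^0(\X_0,\L_0^k))$, and is therefore a Lie algebra homomorphism; in particular it is $\C$-linear, and extends to rational points by homogeneity.

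The key observation is then that the weight is additive. Given rational $v_1, v_2$, linearity of $A_k(\cdot)$ gives $A_k(v_1+v_2) = A_k(v_1) + A_k(v_2)$, whence $w_{v_1+v_2}(k) = \tr A_k(v_1+v_2) = w_{v_1}(k) + w_{v_2}(k)$ for every $k \gg 0$ by linearity of the trace. Since $w_{v_1}(k)$ and $w_{v_2}(k)$ each agree with a polynomial in $k$ of degree $n+1$ for $k \gg 0$, so does their sum, and comparing coefficients gives $b_0(v_1+v_2) = b_0(v_1) + b_0(v_2)$ and $b_1(v_1+v_2) = b_1(v_1) + b_1(v_2)$. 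Thus $v \mapsto b_0(v)$ and $v \mapsto b_1(v)$ are additive.

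Finally, the Hilbert coefficients $a_0$ and $a_1$ of $(\X_0,\L_0)$ are intrinsic to the polarised scheme and do not depend on $v$. Hence $\DF(v) = (b_0(v) a_1 - b_1(v) a_0)/a_0$ is a fixed linear combination of the additive functions $b_0$ and $b_1$, and is therefore additive in $v$; the same argument with scaling in place of addition shows it is in fact linear on the rational points. There is no substantive obstacle here: the only point that requires any care is the identification of the generator of $v_1+v_2$ with the sum of the generators of $v_1$ and $v_2$, and this is immediate once one recognises $A_k(\cdot)$ as the differential of a representation, so that no commutativity hypothesis on the two vector fields is needed.
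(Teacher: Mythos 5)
Your proof is correct and follows essentially the same route as the paper, which records this lemma as an immediate consequence of the definitions: additivity of the weight polynomial (hence of $b_0,b_1$) comes from linearity of the infinitesimal generator and of the trace, while $a_0,a_1$ are intrinsic to $(\X_0,\L_0)$ and independent of the vector field.

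One caveat on your closing remark. The identity $A_k(v_1+v_2)=A_k(v_1)+A_k(v_2)$ indeed needs no commutativity, but the \emph{statement} of the lemma does, in the sense that the sum of two rational points need not be a rational point when they fail to commute: already in $\mathfrak{sl}_2$ the sum of the semisimple integral elements $\diag(1,-1)$ and $\left(\begin{smallmatrix} -1 & 1 \\ 0 & 1 \end{smallmatrix}\right)$ is nilpotent, hence integrates to no one-parameter subgroup of multiplicative type, and then $\DF$ of the sum is simply not defined in the test-configuration framework. This is exactly why the paper glosses addition of integral elements as composition of \emph{commuting} one-parameter subgroups. Your argument does prove additivity whenever all three invariants are defined, and in the application in the main theorem ($v$ and $v_{\Y}$ commute on $\Y_0$) this is the only case needed; so the caveat affects only your claim of extra generality, not the substance of the proof, but the parenthetical assertion that commutativity plays no role should be dropped or qualified.
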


Here adding integral elements of the Lie algebra corresponds to composing the (commuting) one-parameter subgroups $\alpha,\beta$. The various quantities involved scale linearly, hence the definitions naturally extend to rational elements of the Lie algebra.  One can in fact extend the definitions to arbitrary, possibly irrational, elements \cite[Theorem 4.14]{cs}, but it will suffice for our purposes to consider rational elements. With this definition, it is clear that one can assume $ \langle \alpha,\beta \rangle = 0$ in the definition of relative K-semistability. 

\section{Proof}

Following the notation in the Introduction, we have a scheme $(X,L)$ with a test configuration $(\X,\L)$ with central fibre $(Y,H)$ and $\C^*$-action induced by $v$. The definition of relative K-semistability requires us to show that for all $v$-invariant test configurations $(\Y,\H)$ for $(Y,H)$ with $\C^*$-action $v_{\Y}$, we have $\DF_v(\Y,\H) \geq 0$. The main tool will be the following result of Li-Wang-Xu and Li-Xu which induces a test configuration for $(X,L)$ from $(\Y,\H)$.

\begin{lemma}\label{lx}\cite[Lemma 3.1]{lwx2}\cite[Lemma 6.1]{LX} In this situation, for any $m \gg 0$ there is a test configuration $(\scZ,\L_{\scZ})$ for $(X,L)$ with central fibre $(\Y_0,\H_0)$ and with $\C^*$-action generated by $mv+v_Y$. \end{lemma}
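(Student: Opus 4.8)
The plan is to realise the two successive degenerations — first of $(X,L)$ to $(Y,H)$ through $(\X,\L)$, then of $(Y,H)$ to $(\Y_0,\H_0)$ through $(\Y,\H)$ — as a single test configuration, by combining the two filtrations that encode them into one. Write $r$ for the exponent of $(\X,\L)$ and $R=\bigoplus_{k\geq 0}H^0(X,L^{rk})$ for the corresponding section ring. Following the filtration description recalled in the Introduction, $(\X,\L)$ is encoded by a finitely generated increasing $\Z$-filtration $\F$ of $R$ whose associated graded ring $\gr_\F R$ agrees, in large degree, with the section ring $\bigoplus_k H^0(Y,H^k)$ of the central fibre $(Y,H)$; the grading of $\gr_\F R$ by $\F$-degree is exactly the $\C^*$-action generated by $v$. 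As $(\Y,\H)$ is a $v$-invariant test configuration of $(Y,H)$, it is likewise encoded by a finitely generated, $v$-invariant filtration $\G$ of $S:=\gr_\F R$, whose associated graded $\gr_\G S$ is the section ring of $(\Y_0,\H_0)$ and carries the two commuting $\C^*$-actions generated by $v$ (inherited from the $\F$-grading, which survives because $\G$ is $v$-invariant) and by $v_Y$ (the $\G$-grading).

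First I would form the composite filtration. For $f\in R\setminus\{0\}$ let $i(f)=\min\{i:f\in\F_iR\}$ be its $\F$-degree, let $\mathrm{in}_\F(f)\in(\gr_\F R)_{i(f)}$ be its $\F$-initial term, and let $j(f)=\min\{j:\mathrm{in}_\F(f)\in\G_jS\}$. For an integer $m$ put $w_m(f)=m\,i(f)+j(f)$ and define the increasing filtration $(\H_m)_\lambda R=\langle f:w_m(f)\leq\lambda\rangle$. This is a finitely generated $\Z$-filtration of $R$, and the key assertion is that for $m\gg 0$ there is a canonical isomorphism of graded rings $\gr_{\H_m}R\cong\gr_\G\gr_\F R$; equivalently, the single linear weight $mi+j$ induces the same initial degeneration as the lexicographic refinement of $\F$ by $\G$. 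Granting this, the induced $\C^*$-action on $\Proj\gr_{\H_m}R$ acts on the bigraded piece of bidegree $(i,j)$ with weight $mi+j$, i.e. is generated by $mv+v_Y$.

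I would then set $\scZ=\Proj$ over $\C[t]$ of the (extended) Rees algebra of $(R,\H_m)$, with $\L_\scZ$ the tautological relatively ample polarisation. Flatness of $\scZ\to\C$ holds because this Rees algebra is torsion-free over $\C[t]$; the fibre over $t\neq 0$ is $(X,L^r)$, since there the Rees algebra localises to $R[t,t^{-1}]$; and by the isomorphism $\gr_{\H_m}R\cong\gr_\G\gr_\F R$ the central fibre is $(\Y_0,\H_0)$, with the asserted $\C^*$-action generated by $mv+v_Y$. Finiteness of type of $\scZ$ amounts to finite generation of the combined Rees algebra, which follows from that of $\F$ and of $\G$ together with fixity of $m$.

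The hard part will be the isomorphism $\gr_{\H_m}R\cong\gr_\G\gr_\F R$ for $m\gg 0$, i.e. checking that the single weight $w_m=mi+j$ recovers the doubly-graded object rather than some coarser degeneration. This is a Gröbner-type boundedness statement: in each graded piece $R_k$ only finitely many bidegrees $(i,j)$ occur, and taking $m$ larger than the spread of the relevant $j$-values makes ordering by $mi+j$ coincide with the lexicographic order in $(i,j)$. The genuine content is to make this choice of $m$ uniform in $k$ — which finite generation of $\F$ and $\G$ supplies — so that simultaneously $\gr_{\H_m}R$ is the double graded ring in every degree and the combined Rees algebra stays finitely generated, making $\scZ$ an honest test configuration.
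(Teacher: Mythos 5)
Your overall architecture --- encode $(\X,\L)$ by a finitely generated filtration $\F$ of the section ring $R$, encode the $v$-invariant test configuration $(\Y,\H)$ by a filtration $\G$ of $S=\gr_\F R$, combine them via the weight $mi+j$, and take $\Proj$ of the Rees algebra --- is precisely the mechanism behind the cited proofs of Li--Xu and Li--Wang--Xu (the paper itself only invokes their result, noting it ``uses ideas from the theory of toric degenerations''). The Rees-algebra formalities in your second and third paragraphs are fine: flatness from torsion-freeness over $\C[t]$, general fibre $(X,L^r)$, weight $mi+j$ on the bidegree-$(i,j)$ piece giving the action $mv+v_Y$, and finite generation of the Rees algebra once the associated graded is identified with the finitely generated ring $\gr_\G\gr_\F R$.

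The gap is in the step you yourself flag as the hard part, and the mechanism you propose for it would fail. You argue that for $m$ ``larger than the spread of the relevant $j$-values'' the total order by $mi+j$ coincides with the lexicographic order on bidegrees $(i,j)$, and that finite generation of $\F$ and $\G$ makes this choice uniform in $k$. It does not: in degree $k$ the $j$-values occurring in $R_k$ spread over an interval whose length grows linearly in $k$ (finite generation bounds the growth \emph{rate}, not the spread), so for every fixed $m$ the order $mi+j$ disagrees with the lexicographic order on some pairs of bidegrees in all sufficiently high degrees. The correct uniformity argument is of a different nature: one does not need the two orders to agree on all bidegrees, only the initial objects to agree, and this can be reduced to finitely many data. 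Concretely, embed $X$ equivariantly in $\pr^N$ by a suitable power of $L$ so that both degenerations are induced by weight vectors on the homogeneous coordinate ring, let $I$ be the defining ideal, and invoke the classical Gr\"obner fact that $\mathrm{in}_{v'}(\mathrm{in}_{w}(I)) = \mathrm{in}_{mw+v'}(I)$ for all $m\gg 0$ --- proved by a finite $S$-pair computation on a Gr\"obner basis of $I$, or equivalently by the finiteness of the Gr\"obner fan. Equivalently, in the toric language of the cited proofs: the closure of the $(\C^*)^2$-orbit of the Hilbert point $[X]$ is a projective toric variety with finitely many torus-fixed points, and the one-parameter limit along $mv+v_Y$ equals the two-step limit as soon as $(m,1)$ lies in the interior of the relevant cone of the (finite) fan. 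This finite reduction --- to a Gr\"obner basis of $I$, or to the finitely many cones of the orbit fan --- is the missing idea; with it your construction closes up and coincides with the argument behind the cited lemma.
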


We are now ready to prove Theorem \ref{intromainthm}. 

\begin{proof}[Proof of Theorem \ref{intromainthm}] Our hypothesis is that $(\X,\L)$ minimises the normalised Donaldson-Futaki invariant over all test configurations for $(X,L)$. In particular we have $$\frac{\DF(\scZ,\L_{\scZ})}{\|(\scZ,\L_{\scZ})\|_2} \geq \frac{\DF(\X,\L)}{\|(\X,\L)\|_2}.$$ 

We may assume $ \langle v,v_{\Y} \rangle = 0$, and hence we wish to show that $\DF(\Y,\H) \geq 0$. This quantity is simply the Futaki invariant of $v_{\Y}$ computed on $\Y_0$. By Remark \ref{wang} the Futaki invariant $F(v)$ in the definition of the relative Donaldson-Futaki invariant can also computed on $(\Y_0,\H_0)$, and in particular all relevant quantities can be calculated on $(\Y_0,\H_0)$.

Suppose for contradiction that $\DF(\Y,\H) = F(v_{\Y})<-\epsilon$ for some $\epsilon\in \Q_{>0}$. By the linearity of the Donaldson-Futaki invariant given by Lemma \ref{linearity}, we have $$\frac{\DF(\scZ,\L_{\scZ})}{\|(\scZ,\L_{\scZ})\|_2}  = \frac{F(v_{\Y}) + mF(v)}{\|v_Y + mv\|_2} < \frac{mF(v)  - \epsilon }{\|mv_{\Y} + v\|_2},$$ where $\|mv_{\Y} + v\|_2$ denotes the norm of the $\C^*$-action induced by $mv_Y + v$ on $(\Y_0,\H_0)$. 
 
We first assume that $F(v) \neq 0$, so that we may scale the actions in such a way that $F(v) = -\|v\|^2_2$. Indeed   since $(\X,\L)$ is an optimal degeneration, its Donaldson-Futaki invariant must be non-positive, and we are assuming $(\X,\L)$ is non-degenerate, so that  $\|v\|_2>0$. This is a standard scaling of the extremal vector field in the study of extremal metrics \cite[p903]{stoppa-szekelyhidi}. Thus the optimality hypothesis implies $$-\|v\|_2  = \frac{F(v)}{\|v\|_2}< \frac{mF(v)  - \epsilon }{\|v_Y + mv\|_2} = \frac{-m\|v\|^2_2 - \epsilon}{{\|v_Y + mv\|_2} }, $$ hence $$-\|v\|_2 \|v_{\Y} + mv\|_2 < -m\|v\|^2_2 - \epsilon.$$ Rearranging and squaring gives $$\|v_Y + mv\|^2_2 > m^2\|v\|_2^2 + 2m\epsilon\|v\|_2 + \frac{\epsilon^2}{\|v\|_2}.$$ Since we assumed  $\langle v,v_{\Y} \rangle=0$, we have $\|v_{\Y} + mv\|^2_2 = m^2 \|v\|_2^2 + \|v_{\Y}\|_2^2.$ Thus $$\|v_{\Y}\|_2^2 > 2m\epsilon\|v\|_2 + \frac{\epsilon^2}{\|v_{\Y}\|_2},$$ which is a contradiction for $m\gg 0$. 

When $F(v)=0$ the argument is easier. Then $(X,L)$ is K-semistable, and the equality $\DF(\scZ,\L_{\scZ}) = mF(v_Y) = m\DF(\Y,\H)$  implies $(Y,H)$ is also K-semistable. 

Hence in both cases $\DF(\Y,\H) \geq 0$ and $(Y,H)$ is K-semistable relative to $v$, as desired.\end{proof}

\bibliography{semistable}
\bibliographystyle{amsplain}

\end{document}